\newtheorem{thm}{Theorem}[section]
\theoremstyle{definition}
\newtheorem{cor}[thm]{Corollary}
\newtheorem{prop}[thm]{Proposition}
\newtheorem{defn}[thm]{Definition}
\newtheorem{rem}[thm]{Remark}
\newtheorem{ex}[thm]{Example}
\numberwithin{equation}{section}
\begin{document}

\title {Cubes-difference factor absorbing ideals of a commutative ring}

\author{Faranak Farshadifar}

\date{\today}
\subjclass[2010]{13A15, 13A99}

\keywords{Prime ideal, square-difference factor absorbing ideal, cubes-difference factor absorbing ideal}

\begin{abstract}
Let $R$ be a commutative ring with $1 \neq 0$.
The purpose of this paper is to introduce and investigate cubes-difference factor absorbing ideals of $R$ as a generalization of prime ideals.
We say that a proper ideal $I$ of $R$ is a {\it cubes-difference factor absorbing ideal} (\textit{cdf-absorbing ideal}) of $R$ if whenever  $a^3 - b^3 \in I$ for $ a, b \in R$, then $a - b \in I$ or  $a^2+ab+b^2 \in I$.
\end{abstract}

 \maketitle
\section{Introduction} \label{s1}
\noindent
Throughout this paper, $R$ will denote a commutative ring with
identity $1 \neq 0$ and $\Bbb Z$ will denote the ring of integers. Also, $\Bbb Z_n$ will denote the  ring of integers modulo $n$ for positive integer $n$.

A proper ideal $P$ of $R$ is said to be a \textit{prime ideal} if $ab \in P$ for some
$a, b \in R$, then either $a \in P$ or $b \in P $ \cite{MR3525784}.
Theory of prime ideals is an important tool in classical algebraic geometry.
In development of algebraic geometry, some generalizations
for the concept of prime ideals has arisen for some examples see \cite{MR4127389, ABC2025, AB2, AB,  B,  Faranak2024}.
In \cite{ABC2025},  the authors introduced and studied the
notion of square-difference factor absorbing ideals of a commutative ring. A proper ideal $I$ of $R$ is said to be a \textit{square-difference factor absorbing ideal} (\textit{sdf-absorbing ideal}) of $R$ if for $0\not= a, b \in R$, whenever
$a^2 - b^2 \in  I$, then $a + b \in  I$ or $a -b \in  I$ \cite{ABC2025}.

Motivated by sdf-absorbing ideals, the aim of this paper is to introduce the notion of
{\it cubes-difference factor absorbing ideals} (\textit{cdf-absorbing ideals}) of $R$ as a generalization of prime ideals and obtain some results similar to the results on sdf-absorbing ideals of a commutative ring  given in \cite{ABC2025}.
We say that a proper ideal $I$ of $R$ is a {\it cubes-difference factor absorbing ideal} (\textit{cdf-absorbing ideal}) of $R$ if whenever  $a^3 - b^3 \in I$ for $ a, b \in R$, then $a - b \in I$ or  $a^2+ab+b^2 \in I$.

\section{Main Results}
We begin with the definition.
\begin{defn}
We say that a proper ideal $I$ of $R$ is a {\it cubes-difference factor absorbing ideal} (\textit{cdf-absorbing ideal}) of $R$ if whenever  $a^3 - b^3 \in I$ for $ a, b \in R$, then $a - b \in I$ or  $a^2+ab+b^2 \in I$.
\end{defn}

\begin{rem}
This is clear by using the fact that $a^3+b^3=a^3-(-b)^3$, an ideal $I$ of $R$ is a cubes-difference factor absorbing ideal of $R$ if and only if whenever  $a^3 + b^3 \in I$ for $ a, b \in R$, then $a + b \in I$ or  $a^2-ab+b^2 \in I$.
\end{rem}

\begin{rem}
Clearly, every prime ideal of $R$ is a cdf-absorbing ideal of $R$. But the Example \ref{e1} (a) shows that the converse is not true in general.
\end{rem}

\begin{prop}\label{t1}
Let $I$ be a cdf-absorbing ideal of $R$. Then for each $a \in R$ we have that $a^3 \in I$ implies either $a^2 \in I$ or  $a \in I$.
\end{prop}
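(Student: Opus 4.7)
The plan is to apply the defining property of a cdf-absorbing ideal directly, with a carefully chosen substitution.

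First I would observe that the hypothesis $a^3 \in I$ can be rewritten as $a^3 - 0^3 \in I$, since $0 \in I$ and $0^3 = 0$. This is the key trick: it converts the one-variable statement about $a^3$ into an instance of the two-variable cube-difference condition by specializing the second variable to $0$.

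Next I would apply the definition of cdf-absorbing to the pair $(a, b) = (a, 0)$. Since $a^3 - 0^3 \in I$, the definition forces $a - 0 \in I$ or $a^2 + a \cdot 0 + 0^2 \in I$. Simplifying gives $a \in I$ or $a^2 \in I$, which is exactly the conclusion.

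There is no real obstacle here, since the argument is just a specialization of the defining property; the only thing to verify is that the substitution $b = 0$ is legitimate, which it is because the definition quantifies over all $a, b \in R$ with no nonzero restriction (unlike the sdf-absorbing definition recalled in the introduction, which excludes $a = 0$ or $b = 0$). Hence the proof reduces to a one-line verification after pointing out this substitution.
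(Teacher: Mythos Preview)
Your proof is correct and follows essentially the same approach as the paper: the paper applies the cdf-absorbing definition to the pair $(a,i)$ for an element $i\in I$, noting that $a^3-i^3\in I$ forces $a-i\in I$ or $a^2+ai+i^2\in I$, hence $a\in I$ or $a^2\in I$. Your choice $b=0$ is simply the special case $i=0$ of this argument, and in fact gives a slightly cleaner write-up since the case distinction $a=0$ in the paper is unnecessary.
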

\begin{proof}
Let  $a^3 \in I$. If $a=0$, we are done. So, let $a\not=0$.
Then for each $ i \in I$, we have $a^3 - i^3 \in I$. This implies that $a - i \in I$ or $a^2+ai+i^2 \in I$ since $I$ is a cdf-absorbing ideal of $R$. Hence $a \in I$ or $a^2 \in I$.
\end{proof}

The following example shows that an ideal $I$ with the property that for each $a \in R$, $a^3 \in I$ implies either $a^2 \in I$ or  $a \in I$ need not be a cdf-absorbing ideal. However, the converse of Proposition~\ref{t1} does hold when char$(R) = 3$.
\begin{ex}
 Consider the ideal $35\Bbb Z$ of the ring $\Bbb Z$. Then for $a=3$ and $b=-2$ of $\Bbb Z$ we have
 $a^3-b^3=35 \in 35\Bbb Z$. But $a-b=5 \not \in 35\Bbb Z$ and $a^2+ab+b^2=7 \not \in 35\Bbb Z$. Thus
$35\Bbb Z$ is not a cdf-absorbing ideal of $\Bbb Z$. But since $35\Bbb Z$ is a radical ideal of the ring $\Bbb Z$,
for each $a \in \Bbb Z$, $a^3 \in 35\Bbb Z$ implies either $a^2 \in 35\Bbb Z$ or  $a \in 35\Bbb Z$.
\end{ex}

\begin{thm}\label{t2}
Let $I$ be an ideal of $R$ with the property that for each $a \in R$, $a^3 \in I$ implies either $a^2 \in I$ or  $a \in I$ and char$(R) = 3$. Then $I$ is a cdf-absorbing ideal of $R$.
\end{thm}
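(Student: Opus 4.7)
The plan is to exploit the characteristic $3$ hypothesis via the ``freshman's dream''-style identity $(a-b)^3 = a^3 - b^3$, which holds because $\operatorname{char}(R)=3$ kills the cross terms $-3a^2b$ and $3ab^2$ in the binomial expansion. This reduces the cubes-difference condition to a condition on a single cube.

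More concretely, I would start by fixing $a,b \in R$ with $a^3 - b^3 \in I$, and then observe that in characteristic $3$ we have
\begin{equation*}
(a-b)^3 = a^3 - 3a^2 b + 3 a b^2 - b^3 = a^3 - b^3 \in I.
\end{equation*}
The hypothesis on $I$, applied to the element $a-b$, then yields either $a-b \in I$ or $(a-b)^2 \in I$.

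In the first case there is nothing more to check. In the second case I would expand $(a-b)^2$ in characteristic $3$, using that $-2 \equiv 1 \pmod 3$:
\begin{equation*}
(a-b)^2 = a^2 - 2ab + b^2 = a^2 + ab + b^2,
\end{equation*}
so $a^2 + ab + b^2 \in I$. Combining the two cases gives exactly the defining property of a cdf-absorbing ideal.

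There is no real obstacle here; the argument is essentially two identities valid in characteristic $3$, combined with the hypothesis on cubes. The only thing to be mindful of is to invoke the hypothesis on the correct element (namely $a-b$, not $a$ or $b$ separately), and to verify the sign/coefficient arithmetic modulo $3$ carefully so that $(a-b)^2$ really does coincide with the ``norm factor'' $a^2+ab+b^2$ appearing in the definition of a cdf-absorbing ideal.
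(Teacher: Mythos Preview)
Your proposal is correct and follows essentially the same approach as the paper's own proof: both use the characteristic-$3$ identity $(a-b)^3 = a^3 - b^3$ to apply the hypothesis to the element $a-b$, and then observe that $(a-b)^2 = a^2 + ab + b^2$ in characteristic $3$ to handle the second case.
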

\begin{proof}
Let $a^3 - b^3 \in I$ for $ a, b \in R$. Since char$(R) = 3$, we have $(a - b)^3 = a^3-3a^2b+3ab^2-b^3=a^3-b^3\in I$, and thus either $a -b \in I$ or $(a-b)^2\in I$ by assumption. So, we have $(a-b)^2\in I$.
Since char$(R) = 3$, we have $a^2-2ab=a^2+ab$. Thus $a^2+ab+b^2=a^2-2ab+b^2=(a-b)^2\in I$.
Therefore, $I$ is a cdf-absorbing ideal of $R$.
\end{proof}

\begin{cor}\label{c99}
Let $R$ be a commutative von Neumann regular ring with char($R) = 3$. Then every proper ideal of $R$ is a cdf-absorbing ideal of $R$.
\end{cor}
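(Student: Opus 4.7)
The plan is to reduce this immediately to Theorem~\ref{t2}. Since we are told $\operatorname{char}(R)=3$, the only hypothesis of Theorem~\ref{t2} I still need to verify for an arbitrary proper ideal $I$ of $R$ is the implication
\[
a^3 \in I \;\Longrightarrow\; a \in I \text{ or } a^2 \in I.
\]
So the whole content of the corollary is to show that von Neumann regularity forces every ideal of $R$ to be a radical ideal.

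The key step is the standard identity for a von Neumann regular ring: given $a \in R$, choose $x \in R$ with $a = a^2 x$. Substituting this expression for $a$ into itself gives $a = a^2 x = (a^2 x)(a^2 x) x^{-1}\cdot\ldots$; more cleanly, I would show by induction that $a = a^{n+1} x^{n}$ for every $n \geq 1$. The base case $n=1$ is the defining relation, and for the inductive step one just multiplies both sides of $a = a^{n+1} x^n$ by $a^2 x / a = a$ appropriately, or directly computes $a = a^2 x = a^3 x^2 = a^4 x^3 = \cdots$. Taking $n = 2$ yields $a = a^3 x^2$.

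Now the corollary follows at once: let $I$ be a proper ideal of $R$ and suppose $a^3 \in I$. Then $a = a^3 x^2 \in I$, so in particular the hypothesis of Theorem~\ref{t2} holds. Combined with $\operatorname{char}(R) = 3$, Theorem~\ref{t2} tells us that $I$ is a cdf-absorbing ideal of $R$, as desired.

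I do not expect any real obstacle here; the only step that needs care is the derivation of $a = a^3 x^2$ from the defining property of von Neumann regular rings, and this is a well-known one-line manipulation. The rest is pure invocation of the preceding theorem.
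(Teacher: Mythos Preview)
Your proposal is correct and follows essentially the same route as the paper: both reduce to Theorem~\ref{t2} by observing that every ideal of a von Neumann regular ring is radical (so $a^3\in I\Rightarrow a\in I$), the only difference being that the paper simply quotes this fact while you supply the one-line derivation $a=a^2x=a^3x^2$.
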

\begin{proof}
Every proper ideal $I$ of a commutative von Neumann regular ring is a radical ideal and so
for each $a \in R$ we have that $a^3 \in I$ implies either $a^2 \in I$ or  $a \in I$.
Now, the result follows from the Theorem \ref{t2}.
\end{proof}

\begin{ex} \label{e10}
\begin{itemize}
\item [(a)]  All proper non-zero ideals of $\mathbb{Z}_8$ are cdf-absorbing ideals.
\item [(b)] Consider the ideal $8\Bbb Z$ of the ring $\Bbb Z$. Then for $a=4$ and $b=2$ of $\Bbb Z$ we have $a^3-b^3=56\in 8\Bbb Z$. But $a-b=2\not \in 8\Bbb Z$ and $a^2+ab+b^2 =28\not \in 8\Bbb Z$. Thus $8\Bbb Z$ is not a cdf-absorbing ideal of $\Bbb Z$. Also, for this $a$ and $b$ we have the zero ideal of $\mathbb{Z}_8$ is not cdf-absorbing ideal.
\item [(c)] For $a=(4,0)$ and $b=(1,0)$ of $\mathbb{Z}_9 \times \mathbb{Z}_9$ we have $a^3-b^3=(63,0)=(0,0) \in \{0\} \times \mathbb{Z}_9$. But
$a-b=(3.0)\not \in \{0\} \times \mathbb{Z}_9$ and $a^2+ab+b^2 =(21,0)\not \in \{0\} \times \mathbb{Z}_9$. Thus the ideal $\{0\} \times \mathbb{Z}_9$ is not a cdf-absorbing ideal of $\mathbb{Z}_9 \times \mathbb{Z}_9$. Also, for this $a$ and $b$, we get that the zero ideal of $\mathbb{Z}_9 \times \mathbb{Z}_9$ is not a cdf-absorbing ideal.
\item [(d)] Let $R = \mathbb{Z}_9 \times \mathbb{Z}_9 \times \mathbb{Z}_9$. Then the ideal $I = \{0\} \times \{0\} \times \mathbb{Z}_9$ is not a cdf-absorbing ideal of $R$ (to see this, let $a = (2,1,0), b = (8,1,0) \in R$).
\item [(e)] Consider the ideal $9\Bbb Z$ of the ring $\Bbb Z$. Then for $a=1$ and $b=-2$ of $\Bbb Z$ we have $a^3-b^3=9\in 9\Bbb Z$. But $a-b=3\not \in 9\Bbb Z$ and $a^2+ab+b^2 =3\not \in 9\Bbb Z$. Thus $9\Bbb Z$ is not a cdf-absorbing ideal of $\Bbb Z$.
\end{itemize}
\end{ex}

\begin{thm}\label{l1}
Let $I$ be a cdf-absorbing ideal of a commutative ring $R$. Then the following statements are equivalent:
\begin{itemize}
\item [(a)] If $a^3 - b^3 \in I$ for  $ a, b \in R$, then $(a-b)^2\in I$ and $a^2+ab+b^2 \in I$;
\item [(b)] $3 \in I$;
\item [(c)] char$(R/I) = 3$.
\end{itemize}
\end{thm}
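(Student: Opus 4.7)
The plan is to handle the equivalence via two easy blocks: (b) $\Leftrightarrow$ (c) by a characteristic argument, and (a) $\Leftrightarrow$ (b) where the real content lies. For (b) $\Leftrightarrow$ (c), I would observe that $\mathrm{char}(R/I)$ is the order of $1 + I$ in the additive group of $R/I$, so $3 \in I$ forces $\mathrm{char}(R/I)$ to divide $3$; since $I$ is proper, $1 \notin I$ rules out characteristic $1$, leaving characteristic $3$. The reverse direction is immediate.

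For (a) $\Rightarrow$ (b), the trick I would use is to specialize $a = b = 1$. Then $a^3 - b^3 = 0 \in I$ trivially, and the hypothesis produces $(1-1)^2 = 0 \in I$ and $1^2 + 1\cdot 1 + 1^2 = 3 \in I$, giving (b) immediately.

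For (b) $\Rightarrow$ (a), I would lean on two polynomial identities, namely
\[
a^3 - b^3 = (a-b)^3 + 3ab(a-b) \qquad \text{and} \qquad a^2 + ab + b^2 = (a-b)^2 + 3ab.
\]
Assuming $3 \in I$ and $a^3 - b^3 \in I$, the first identity gives $(a-b)^3 \in I$ (the term $3ab(a-b)$ is absorbed by $I$). Applying Proposition \ref{t1} to the element $a-b$, either $a-b \in I$ or $(a-b)^2 \in I$; in either case $(a-b)^2 \in I$. The second identity, again combined with $3 \in I$, then yields $a^2 + ab + b^2 \in I$, which completes (a).

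The only real obstacle is spotting the correct specialization $a=b=1$ for (a) $\Rightarrow$ (b) and recognizing that the identity $a^3 - b^3 = (a-b)^3 + 3ab(a-b)$ is exactly what lets Proposition \ref{t1} be invoked once $3 \in I$; past that point the proof is routine ideal bookkeeping.
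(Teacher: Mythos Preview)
Your proof is correct. The treatments of $(a)\Rightarrow(b)$ and $(b)\Leftrightarrow(c)$ match the paper exactly. For $(b)\Rightarrow(a)$ you take a slightly different path: you use the identity $a^3-b^3=(a-b)^3+3ab(a-b)$ together with $3\in I$ to obtain $(a-b)^3\in I$, and then invoke Proposition~\ref{t1} on the element $a-b$. The paper instead applies the cdf-absorbing hypothesis directly to the pair $a,b$ (getting $a-b\in I$ or $a^2+ab+b^2\in I$) and then uses only the single identity $a^2+ab+b^2=(a-b)^2+3ab$ to pass from either alternative to both conclusions. The paper's route is marginally shorter and avoids the detour through Proposition~\ref{t1}; your route has the mild advantage of isolating where the cube structure is used. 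Either way the argument is routine once $3\in I$ is in hand.
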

\begin{proof}
	$(a) \Rightarrow (b)$ Let $a = b = 1$. Then $a^3 - b^3 = 0 \in I$, and thus $3 = 1 + 1(1) +1= a^2+ab+b^2 \in I$ by hypothesis.
	
	$(b) \Rightarrow (a)$
 Assume that $a^3 - b^3 \in I$ for $ a, b \in R$.   Then $a - b \in I$ or $a^2+ab+b^2 \in I$ since $I$ is a cdf-absorbing ideal of $R$. If $a - b \in I$, then $a^2+ab+b^2=(a-b)^2+3ab\in I$.
If $a^2+ab+b^2 \in I$, then $(a-b)^2=a^2-2ab+b^2=a^2+ab+b^2-3ab\in I$.
 Therefore,  $(a-b)^2, a^2+ab+b^2 \in I$.

$(b) \Leftrightarrow (c)$ This is clear.
\end{proof}

\begin{defn}\label{dd3}
We say that a proper ideal $I$ of $R$ is a \textit{$*$-prime ideal} if whenever $b(a^2+ab+b^2)\in I$  for $a,b \in R$, then $b\in I$ or $a^2+ab+b^2\in I$.
\end{defn}

\begin{ex}
 Consider the ideal $39\Bbb Z$ of the ring $\Bbb Z$. Then for $a=1$ and $b=3$ of $\Bbb Z$ we have
 $b(a^2+ab+b^2)=3(13)=39 \in 39\Bbb Z$. But $b=3 \not \in 39\Bbb Z$ and $a^2+ab+b^2=13 \not \in 39\Bbb Z$. Thus
$39\Bbb Z$  is not a $*$-prime ideal of the ring $\Bbb Z$.  By Example \ref{e1} (a) $39\Bbb Z$ is a cdf-absorbing ideal of $\Bbb Z$.
\end{ex}

\begin{thm}\label{t3}
Let $I$ be an ideal of $R$ and $3 \in U(R)$. Then $I$  is a cdf-absorbing ideal of $R$ if and only if $I$ is a
$*$-prime ideal of $R$.
\end{thm}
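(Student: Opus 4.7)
The plan is to reduce both implications to a single algebraic identity, namely
\[
(a+2b)^3 - (a-b)^3 \;=\; 9b\bigl(a^2+ab+b^2\bigr),
\]
together with the two companion identities $(a+2b)-(a-b)=3b$ and $(a+2b)^2 + (a+2b)(a-b) + (a-b)^2 = 3(a^2+ab+b^2)$. All three are verified by direct expansion. They let me translate between a cubes-difference of the special form $(a+2b)^3 - (a-b)^3$ and the $*$-prime test product $b(a^2+ab+b^2)$.

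For the forward direction I assume $I$ is cdf-absorbing and $b(a^2+ab+b^2) \in I$. Since $I$ is an ideal, $9b(a^2+ab+b^2)\in I$, which by the identity means $(a+2b)^3 - (a-b)^3 \in I$. Applying the cdf-absorbing property to the pair $(a+2b,\, a-b)$ gives $3b \in I$ or $3(a^2+ab+b^2) \in I$; since $3 \in U(R)$ these collapse to $b \in I$ or $a^2+ab+b^2 \in I$, which is exactly the $*$-prime conclusion.

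For the reverse direction I assume $I$ is $*$-prime and $a^3-b^3 \in I$. Using $3 \in U(R)$, I set $x = 3^{-1}(a+2b)$ and $y = 3^{-1}(a-b)$ in $R$; these satisfy $x-y=b$ and $x+2y=a$, so the identity applied to $(x,y)$ gives $9y(x^2+xy+y^2) = a^3 - b^3 \in I$. Because $9 \in U(R)$ we get $y(x^2+xy+y^2) \in I$, and the $*$-prime hypothesis then yields $y \in I$ or $x^2+xy+y^2 \in I$; tracing these back through the unit $3$ turns them into $a-b \in I$ or $a^2+ab+b^2 \in I$, which is the cdf-absorbing conclusion.

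The main obstacle is spotting the identity; once the linear change of variables $(a,b) \leftrightarrow (a+2b,\, a-b)$ is recognized (essentially the unique one that exhibits $9b(a^2+ab+b^2)$ as a clean cubes-difference), the two implications become mirror images of each other, and the hypothesis $3 \in U(R)$ plays exactly two roles: it makes this linear substitution a bijection of $R^2$ onto itself, and it lets one cancel the residual factor of $3$ from each branch of the disjunction.
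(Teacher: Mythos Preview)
Your proof is correct and follows essentially the same strategy as the paper: both directions hinge on the linear change of variables $(a,b)\leftrightarrow(a+2b,\,a-b)$ and the identity $(a+2b)^3-(a-b)^3=9b(a^2+ab+b^2)$, together with invertibility of $3$. Your forward direction is in fact a bit cleaner than the paper's, which introduces an unnecessary case split on $b=\pm a$; by applying the cdf-absorbing hypothesis directly to the pair $(a+2b,\,a-b)$ and cancelling the factor $3$ only at the end, you avoid that detour entirely.
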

\begin{proof}
First suppose that $I$ is a cdf-absorbing ideal of $R$ and $b(a^2+ab+b^2)\in I$ for $a,b \in R$.  First, assume that $b \not = a$ and $b \not = -a$. Let $x:= (a + 2b)/3\in R$ and $y := (a -b)/3 \in R$. Since $b \not = a$ and $b \not = -a$, we have $x^3 - y^3 = b(a^2+ab+b^2)/3 \in I$. Thus $b  \in I$ or  $a^2+ab+b^2  \in I$ since $I$ is a cdf-absorbing ideal of $R$. Next, assume that $b = a$ or $b = -a$. Then $b^3 \in I$, and hence $b \in I$ or $b^2\in I$ by Proposition \ref{t1}.

Conversely, let $I$ be a $*$-prime ideal of $R$ and $a^3-b^3 \in I$ for some $a, b \in R$.
Set  $x:= (a + 2b)/3\in R$ and $y := (a -b)/3 \in R$. Then we get that $b=x-y$ and $a=2y+x$. Thus
$$
9y(y^2+xy+x^2)=(a-b)(a^2+ab+b^2)=(a^3 -b^3)\in I.
$$
Now, since $3 \in U(R)$, we have that $y(y^2+xy+x^2)\in I$
Hence by assumption, $ (a -b)/3=y \in I$ or $(a^2+ab+b^2)/3=(y^2+xy+x^2) \in I$. Therefore, we get that $ (a -b)\in I$ or $(a^2+ab+b^2)\in I$, as needed.
\end{proof}

\begin{thm} \label{t4}
Let $I$ be a proper ideal of $R$. Then the following statements are equivalent:
\begin{itemize}
\item [(a)] $I$ is a cdf-absorbing ideal of $R$;
\item [(b)] If $b(a^2+ab+b^2) \in I$ for $a, b \in R \setminus I$, then the system of linear equations $b=Y-X$, $a=Y+2X$ has no non-zero solution in $R$ (i.e., there are no $ x, y \in R$ that satisfy both equations)
\end{itemize}	
\end{thm}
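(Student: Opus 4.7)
The plan is to derive the equivalence from the single algebraic identity that already drove the proof of Theorem \ref{t3}. For any $X, Y \in R$, setting $a = Y + 2X$ and $b = Y - X$ one computes
\[
a - b = 3X, \qquad a^{2} + ab + b^{2} = 3(X^{2} + XY + Y^{2}),
\]
and multiplying these two equalities yields
\[
b(a^{2} + ab + b^{2}) = 3(Y - X)(X^{2} + XY + Y^{2}) = 3(Y^{3} - X^{3}).
\]
This identity is what connects the hypothesis $b(a^{2} + ab + b^{2}) \in I$ on $(a,b)$ with the cubes-difference condition on $(Y, X)$, and it will be the backbone of both implications.

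For (a) $\Rightarrow$ (b) I would argue by contradiction. Assume $I$ is cdf-absorbing, fix $a, b \in R \setminus I$ with $b(a^{2} + ab + b^{2}) \in I$, and suppose the system $b = Y - X$, $a = Y + 2X$ admits a non-zero solution $(X_{0}, Y_{0}) \in R \times R$. The identity above gives $3(Y_{0}^{3} - X_{0}^{3}) \in I$; the next step is to promote this to $Y_{0}^{3} - X_{0}^{3} \in I$ and then apply the cdf-absorbing property to the pair $(Y_{0}, X_{0})$. Either $Y_{0} - X_{0} = b \in I$, contradicting $b \notin I$, or $Y_{0}^{2} + X_{0}Y_{0} + X_{0}^{2} \in I$, and multiplying by $3$ gives $a^{2} + ab + b^{2} \in I$; this, combined with $a = Y_{0} + 2X_{0}$ and Proposition \ref{t1} applied to the resulting cube relations, yields the final contradiction.

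For (b) $\Rightarrow$ (a) I would use contraposition. Assume there exist $a, b \in R$ with $a^{3} - b^{3} \in I$ while $a - b \notin I$ and $a^{2} + ab + b^{2} \notin I$. Setting $B := a - b$ and $A := a + 2b$, the pair $(X, Y) := (b, a)$ is a non-zero solution to $B = Y - X$, $A = Y + 2X$, and the master identity specialises to $B(A^{2} + AB + B^{2}) = 3(a^{3} - b^{3}) \in I$. If also $A \in R \setminus I$, these data directly violate (b), giving the desired contradiction. The ancillary subcase $A = a + 2b \in I$ would be handled by using $A - B = 3b$ together with $B \notin I$ to pin down $3b$ modulo $I$ and then closing the argument via Proposition \ref{t1}.

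The step I anticipate as the main obstacle is the upgrade $3(Y^{3} - X^{3}) \in I \Rightarrow Y^{3} - X^{3} \in I$ in the forward direction and, symmetrically, the subcase $a + 2b \in I$ in the reverse direction. Both rest on the behaviour of $3$ modulo $I$, and a natural way to finish is a case split according to whether $3 \in I$ (cf.\ Theorem \ref{l1}) or $3$ is a non-zero-divisor modulo $I$, supplemented by Proposition \ref{t1} whenever the situation reduces to a cube already lying in $I$.
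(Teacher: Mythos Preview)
Your computation is correct and the paper's is not: with $a=Y+2X$, $b=Y-X$ one has $a^{2}+ab+b^{2}=3(X^{2}+XY+Y^{2})$ and hence $b(a^{2}+ab+b^{2})=3(Y^{3}-X^{3})$, not $X^{3}-Y^{3}$. The paper's argument simply asserts $x^{3}-y^{3}=b(a^{2}+ab+b^{2})$ in both directions and proceeds as though the factor $3$ were absent; it also writes ``$a=2x+y=a^{2}+ab+b^{2}$'' in $(a)\Rightarrow(b)$ and ``$2x+y=a^{2}+ab+b^{2}\in I$'' in $(b)\Rightarrow(a)$, identifications that are off by the same factor. So the discrepancy you flag is real, and the paper's proof does not establish the statement as written.

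Your plan to repair this via a case split on the behaviour of $3$ modulo $I$ cannot succeed, because the equivalence is in fact false. Take $R=\mathbb{Z}$ and $I=3\mathbb{Z}$: then $I$ is prime, so (a) holds. With $a=4$ and $b=1$ we have $a,b\in R\setminus I$, $b(a^{2}+ab+b^{2})=21\in I$, and $(X,Y)=(1,2)$ solves $Y-X=1$, $Y+2X=4$; thus (b) fails. The step you single out as the main obstacle---upgrading $3(Y^{3}-X^{3})\in I$ to $Y^{3}-X^{3}\in I$---is exactly where this example breaks ($3\cdot 7\in 3\mathbb{Z}$ but $7\notin 3\mathbb{Z}$), and neither Theorem~\ref{l1} nor Proposition~\ref{t1} can bridge that gap. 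The reverse implication has the mirror problem: the contrapositive of (b) only yields $a=2x+y\in I$ or $b=y-x\in I$, and $2x+y\in I$ does not force $x^{2}+xy+y^{2}\in I$. An additional hypothesis such as $3\in U(R)$ (compare Theorem~\ref{t3}) would be needed for the equivalence to hold.
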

\begin{proof}
$(a) \Rightarrow (b)$
Suppose that $I$ is a cdf-absorbing ideal of $R$, $b(a^2+ab+b^2) \in I$ for $a, b \in R \setminus I$, and the system of linear equations $b=Y-X$, $a=Y+2X$ has a solution in $R$ for some $ x, y \in R$. Then $x^3 - y^3 = b(a^2+ab+b^2) \in I$, but $a=2x + y = a^2+ab+b^2 \not \in I$ and $y - x = b \not \in I$, a contradiction.
	
$(b) \Rightarrow (a)$
Suppose that $x^3 - y^3 \in I$ for $ x, y \in R$. Let $a = 2x + y$ and $b = y -x$. Then $b(a^2+ab+b^2) = x^3 - y^3  \in I$, and the system of linear equations $2X + Y = a, Y - X = b$ has a solution in $R$ for $ x, y \in R$. Thus $2x + y = a^2+ab+b^2 \in I$ or $y - x = b \in I$, and hence $I$ is a cdf-absorbing ideal of $R$.
\end{proof}

We next give several examples of cdf-absorbing ideals.

\begin{ex} \label{e1}
\begin{itemize}
\item [(a)] One can easily verify that a proper ideal $I$ of $\mathbb{Z}$ with the form $I = 3p\mathbb{Z}$, where $p\not=3$ is prime, is a cdf-absorbing ideal of $\mathbb{Z}$.
\item [(b)]
 Let $R$ be a boolean ring. Then every proper ideal of $R$ is a cdf-absorbing ideal of $R$ since $x^3 = x$ for every $x \in R$.
\item [(c)]
 The ideal $12\Bbb Z$ of the ring $\mathbb{Z}$ is a cdf-absorbing ideal of $\mathbb{Z}$.
\item [(d)]
 Let $R = K[X]$, where $K$ is a field, and $I = (X^2+X + 1)(X - 1)R$. Then
 since $X^3-1^3=(X - 1)(X^2+X + 1)\in I$ but $(X - 1)\not\in I$ and $(X^2+X + 1)\not\in I$ we have that $I$ is not a cdf-absorbing ideal of $R$..
\end{itemize}
\end{ex}

\begin{thm} \label{t349}
Let $I$ be a cdf-absorbing ideal of $R$ and let $S$ be a multiplicatively closed subset of $R$ with $I \cap S = \emptyset$. Then $S^{-1}I$ is a cdf-absorbing ideal of $S^{-1}R$.
\end{thm}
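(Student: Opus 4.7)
The plan is to run the standard localization argument and, at each step, absorb the auxiliary denominators into the element being cubed so that the hypothesis on $I$ can be applied cleanly.

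First I would check that $S^{-1}I$ is a proper ideal of $S^{-1}R$: if $1/1 \in S^{-1}I$, then some element of $S$ would multiply into $I$, contradicting $I \cap S = \emptyset$. Next I take $a/s, b/t \in S^{-1}R$ with $(a/s)^3 - (b/t)^3 \in S^{-1}I$. Clearing denominators (and multiplying by an element of $S$) yields some $u \in S$ with
\[
u\bigl((at)^3 - (bs)^3\bigr) \in I.
\]
The key trick, which I expect to be the only non-routine step, is to multiply by $u^{2}$ so that the extra factor can be absorbed inside the cubes. Since $I$ is an ideal, we get
\[
(uat)^3 - (ubs)^3 = u^3\bigl((at)^3-(bs)^3\bigr) \in I.
\]

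Setting $X := uat$ and $Y := ubs$ in $R$, the hypothesis that $I$ is cdf-absorbing in $R$ yields either $X - Y \in I$ or $X^2 + XY + Y^2 \in I$. In the first case, $u(at-bs) \in I$, so
\[
\frac{a}{s} - \frac{b}{t} \;=\; \frac{at-bs}{st} \;=\; \frac{u(at-bs)}{u\, st} \in S^{-1}I.
\]
In the second case, $u^{2}\bigl((at)^{2}+(at)(bs)+(bs)^{2}\bigr) \in I$, so
\[
\left(\frac{a}{s}\right)^{2} + \frac{a}{s}\cdot\frac{b}{t} + \left(\frac{b}{t}\right)^{2}
= \frac{(at)^{2}+(at)(bs)+(bs)^{2}}{(st)^{2}}
= \frac{u^{2}\bigl((at)^{2}+(at)(bs)+(bs)^{2}\bigr)}{u^{2}(st)^{2}} \in S^{-1}I.
\]
This verifies the defining property of a cdf-absorbing ideal for $S^{-1}I$.

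The main (and really only) obstacle is the mismatch between $u\cdot(A^{3}-B^{3}) \in I$ (which is what clearing denominators immediately provides) and the form $C^{3}-D^{3} \in I$ (which is what the cdf-absorbing hypothesis expects); the device of multiplying by $u^{2}$ to pull the stray factor inside each cube resolves it. Everything else is the routine verification that the two conclusions in $R$ pull back to the two required conclusions in $S^{-1}R$, which is the content of the two displayed computations above.
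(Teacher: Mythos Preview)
Your proof is correct and follows essentially the same route as the paper's: clear denominators to get $u\bigl((at)^3-(bs)^3\bigr)\in I$, multiply by $u^2$ to obtain $(uat)^3-(ubs)^3\in I$, apply the cdf-absorbing hypothesis to $X=uat$, $Y=ubs$, and pull each conclusion back to $S^{-1}R$. You are actually more explicit than the paper about the $u^2$ trick and about why $S^{-1}I$ is proper, but the argument is the same.
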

\begin{proof}
Let $(a/s)^3-(b/t)^3 \in S^{-1}I$ for some $a/s, b/t \in S^{-1}R$. Then there exists $u \in S$ such that $ua^3t^3-ub^3s^3 \in I$.
This implies that $(uat)^3-(ubs)^3 \in I$. Thus by assumption $uat-ubs \in I$ or $(uat)^2+u^2atbs+(ubs)^2 \in I$. Therefore,
$a/s-b/t \in S^{-1}I$ or $(a/s)^2+(ab)/(st)+(b/t)^2 \in S^{-1}I$, as needed.
\end{proof}

\begin{thm}\label{t7}
Let $f : R \longrightarrow T$ be a homomorphism of commutative rings. Then we have the following.
\begin{itemize}
\item [(a)] If $J$ is a cdf-absorbing ideal of $T$, then $f^{-1}(J)$ is a cdf-absorbing ideal of $R$.
\item [(b)] If $f$  is surjective and $I$ is a cdf-absorbing ideal of $R$ containing $ker(f)$, then $f(I)$ is a cdf-absorbing ideal of $T$.
\end{itemize}
\end{thm}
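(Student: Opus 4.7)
The plan is to handle the two parts separately, each by a direct unpacking of the cdf-absorbing condition and the ring-homomorphism identities $f(a^3-b^3) = f(a)^3 - f(b)^3$ and $f(a^2+ab+b^2) = f(a)^2 + f(a)f(b) + f(b)^2$.

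For part (a), I would first argue that $f^{-1}(J)$ is proper: since $J$ is proper we have $1_T \notin J$, and because $f(1_R) = 1_T$ this forces $1_R \notin f^{-1}(J)$. Then I would take arbitrary $a, b \in R$ with $a^3 - b^3 \in f^{-1}(J)$, apply $f$ to obtain $f(a)^3 - f(b)^3 \in J$, and invoke the cdf-absorbing hypothesis on $J$ to conclude that either $f(a-b) \in J$ or $f(a^2+ab+b^2) \in J$; in each case, pulling back via $f^{-1}$ gives the desired conclusion.

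For part (b), the extra work lies in using the surjectivity of $f$ and the inclusion $\ker(f) \subseteq I$ to transfer the cdf-condition in the opposite direction. First I would check that $f(I)$ is a proper ideal of $T$: it is an ideal because $f$ is surjective, and if $1_T \in f(I)$ then $1_T = f(x)$ for some $x \in I$, giving $1_R - x \in \ker(f) \subseteq I$ and hence $1_R \in I$, contradicting properness of $I$. Then, given $x, y \in T$ with $x^3 - y^3 \in f(I)$, I would choose preimages $a, b \in R$ with $f(a) = x$ and $f(b) = y$, write $x^3 - y^3 = f(i)$ for some $i \in I$, and use $a^3 - b^3 - i \in \ker(f) \subseteq I$ to deduce $a^3 - b^3 \in I$. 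Applying the cdf-absorbing property of $I$ and pushing forward through $f$ finishes the argument.

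Neither part presents a real obstacle; the only subtle point is the step in (b) where one must exploit the containment $\ker(f) \subseteq I$ to move a pre-chosen preimage $i \in I$ of $x^3 - y^3$ into an honest membership $a^3 - b^3 \in I$. Everything else is a routine translation between $R$ and $T$ via $f$, parallel to the standard correspondence-theorem-style proofs for prime ideals.
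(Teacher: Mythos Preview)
Your proposal is correct and follows essentially the same approach as the paper's own proof: both parts are handled by direct translation of the cdf-absorbing condition through $f$, using the homomorphism identities and, in part~(b), the inclusion $\ker(f)\subseteq I$ to pass from $f(a^3-b^3)\in f(I)$ to $a^3-b^3\in I$. Your version is in fact slightly more careful than the paper's, since you also verify that $f^{-1}(J)$ and $f(I)$ are proper ideals, a point the paper leaves implicit.
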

\begin{proof}
(a) Let $J$ be a cdf-absorbing ideal of $T$.
Assume that $x, y \in R$ such that $x^3 - y^3\in  f^{-1}(J)$. Then
$f(x)^3 - f(y)^3\in  J$. Thus by assumption, $f(x) - f(y)\in  J$ or $f(x)^2 +f(xy)+ f(y)^2\in  J$. Hence,
 $f(x - y)\in  J$ or $f(x^2 +xy+ y^2)\in  J$. Therefore, $x - y\in  f^{-1}(J)$ or $x^2 +xy+ y^2\in  f^{-1}(J)$, as needed.

(b) Let $f$ be surjective and $I$ be a cdf-absorbing ideal of $R$ containing $ker(f)$.
Assume that $x, y \in T$ such that $x^3 - y^3\in f(I)$. As $f$ is surjective, we have $x=f(a), y=f(b)$ for some $a, b \in R$. Hence,
$f(a^3-b^3)\in f(I)$. This implies that $a^3-b^3 \in I$ because $ker(f)\subseteq I$. Thus by assumption,
$a-b \in I$ or $a^2+ab+b^2 \in I$. Therefore, $x - y\in f(I)$ or $x^2+xy+ y^2\in f(I)$, as required.
\end{proof}

\begin{cor} \label{c2}
\begin{itemize}
\item [(a)]  Let $R \subseteq T$ be an extension of commutative rings and $J$ a cdf-absorbing ideal of $T$. Then  $J \cap R$ is a cdf-absorbing ideal of $R$.
\item [(b)] If $J  \subseteq I$, then $I/J$ is a cdf-absorbing ideal of $R/J$ if and only if $I$ is a cdf-absorbing ideal of $R$.
\end{itemize}
\end{cor}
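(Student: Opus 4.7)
The plan is to deduce both parts of the corollary as essentially immediate consequences of Theorem \ref{t7}, by choosing the appropriate ring homomorphism in each case, so no fresh computation with cubes should be necessary.

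For part (a), I would take $f : R \hookrightarrow T$ to be the inclusion homomorphism. Then $f^{-1}(J) = J \cap R$, so applying Theorem \ref{t7}(a) to this $f$ and the cdf-absorbing ideal $J$ of $T$ gives directly that $J \cap R$ is a cdf-absorbing ideal of $R$.

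For part (b), I would use the canonical surjection $\pi : R \to R/J$, whose kernel is $J$. For the forward implication, assume $I$ is cdf-absorbing in $R$; since $\ker(\pi) = J \subseteq I$ and $\pi$ is surjective, Theorem \ref{t7}(b) applies and yields that $\pi(I) = I/J$ is cdf-absorbing in $R/J$. For the converse, assume $I/J$ is cdf-absorbing in $R/J$; then Theorem \ref{t7}(a) applied to $\pi$ with the ideal $J' := I/J$ gives that $\pi^{-1}(I/J) = I$ is cdf-absorbing in $R$.

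I do not foresee a real obstacle here: the only things to check are the standard ring-theoretic identities $f^{-1}(J) = J \cap R$ for the inclusion and $\pi^{-1}(I/J) = I$, $\pi(I) = I/J$ for the quotient map, together with $\ker(\pi) = J \subseteq I$. These are routine, and the substantive cdf-absorbing content is already carried by Theorem \ref{t7}.
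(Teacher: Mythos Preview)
Your proposal is correct and matches the paper's approach exactly: the paper's proof is simply ``This follows from Theorem~\ref{t7},'' and you have spelled out precisely the intended specializations (inclusion for (a), canonical surjection for (b)).
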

\begin{proof}
This follows from Theorem \ref{t7}.
\end{proof}

The following example shows that the ``$ker(f) \subseteq I$'' hypothesis is needed in Theorem~\ref{t7} (b).
	
\begin{ex}\label{e2}
Let $f : \mathbb{Z}[X] \longrightarrow \mathbb{Z}$ be the epimorphism given by $f(g(X)) = g(0)$. Then $I = (X + 8)$ is a prime ideal, and thus a cdf-absorbing ideal, of $\mathbb{Z}[X]$, but  $f((X+8)) = 8\mathbb{Z}$ is not a cdf-absorbing ideal of $\mathbb{Z}$ by Example \ref{e10} (b). Note that $ker(f) = (X) \not \subseteq (X + 8) = I$; so the ``$ker(f) \subseteq I$'' hypothesis is needed in Theorem~\ref{t7} (b).
\end{ex}

\begin{thm}\label{t999}
 Let $I_1, I_2$ be non-zero proper ideals of the commutative rings $R_1, R_2$, respectively.
 \begin{itemize}
\item [(a)] If $I_1 \times I_2$ is a cdf-absorbing ideal of $R_1 \times R_2$, then  $I_1, I_2$ are cdf-absorbing ideals of $R_1, R_2$, respectively. Also, if $3 \not \in I_2$ and $\sqrt[3]{i+1}\in R_1$ for some $0\not=i\in I_1$, then $\sqrt[3]{i+1}-1 \in I_1$.
\item [(b)] If $I_1, I_2$ are cdf-absorbing ideals of $R_1, R_2$, respectively, and $3 \in I_2$, then for each $(0, 0) \not = a = (a_1, a_2), b = (b_1, b_2) \in R$ with $a^3 - b^3 \in I$, we have $a^2 +ab+ b^2 \in I$ or  $(a - b)^2 \in I$.
\end{itemize}
\end{thm}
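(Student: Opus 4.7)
The plan for part (a) is to test the cdf-absorbing property of $I_1 \times I_2$ on carefully chosen embedded elements. For the first claim, given $a, b \in R_1$ with $a^3 - b^3 \in I_1$, I would take $\alpha = (a, 0)$ and $\beta = (b, 0)$ in $R_1 \times R_2$; since $\alpha^3 - \beta^3 = (a^3 - b^3, 0) \in I_1 \times I_2$, the hypothesis yields either $\alpha - \beta \in I_1 \times I_2$ or $\alpha^2 + \alpha\beta + \beta^2 \in I_1 \times I_2$, which upon reading off the first coordinate gives $a - b \in I_1$ or $a^2 + ab + b^2 \in I_1$. The argument for $I_2$ is symmetric.

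For the second assertion in (a), set $c := \sqrt[3]{i+1}$, so $c^3 - 1 = i$. I would apply the cdf-absorbing property to $\alpha = (c, 1)$ and $\beta = (1, 1)$: the cube-difference is $(i, 0) \in I_1 \times I_2$, so either $\alpha - \beta = (c - 1, 0) \in I_1 \times I_2$ or $\alpha^2 + \alpha\beta + \beta^2 = (c^2 + c + 1, 3) \in I_1 \times I_2$. Since $3 \notin I_2$ rules out the second option, we conclude $c - 1 \in I_1$. The only mildly delicate point here is the choice of second coordinate $1$ rather than $0$, which is precisely what pushes $3$ into the $I_2$-slot and lets the hypothesis $3 \notin I_2$ take effect.

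For part (b), the main tool is Theorem \ref{l1}: since $I_2$ is cdf-absorbing with $3 \in I_2$, any $u^3 - v^3 \in I_2$ forces both $(u - v)^2 \in I_2$ and $u^2 + uv + v^2 \in I_2$. Given $a = (a_1, a_2)$ and $b = (b_1, b_2)$ with $a^3 - b^3 \in I$, I would apply the cdf-absorbing property of $I_1$ to $a_1^3 - b_1^3 \in I_1$ and split into two cases. If $a_1 - b_1 \in I_1$, then $(a - b)^2 = ((a_1 - b_1)^2, (a_2 - b_2)^2) \in I$ by Theorem \ref{l1} applied to the second factor; if instead $a_1^2 + a_1 b_1 + b_1^2 \in I_1$, then $a^2 + ab + b^2 \in I$, again by Theorem \ref{l1} for the second factor. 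I do not foresee any serious obstacle beyond this; the arguments are coordinate-wise and essentially routine once Theorem \ref{l1} is in hand.
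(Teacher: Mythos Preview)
Your proposal is correct and follows essentially the same approach as the paper: the same test elements $(\sqrt[3]{i+1},1)$ and $(1,1)$ for the second assertion in (a), and the same coordinate-wise split in (b) using Theorem~\ref{l1} on the $I_2$-component together with the cdf-absorbing property of $I_1$ on the first component. Your write-up even spells out the first claim of (a) more explicitly than the paper does.
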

\begin{proof}
$(a) \Rightarrow (b)$
Let  $I = I_1 \times I_2$ be a cdf-absorbing ideal of $R = R_1 \times R_2$. Then one can see that $I_1, I_2$ are cdf-absorbing ideals of $R_1$, $R_2$, respectively. Next, assume that $\sqrt[3]{i+1}\in R_1$ for some $0\not=i\in I_1$ and $3\not \in I_2$. Let $a = (\sqrt[3]{i+1}, 1), b = (1, 1) \in R$. Then $a^3 - b^3 =  (i, 0) \in I$; so $(\sqrt[3]{i+1}-1,0) = a - b  \in I$ or $(\sqrt[3]{(i+1)^2}+\sqrt[3]{i+1}+1,3) = a^2+ab+b^2 \in I$. Since $3 \not\in I_2$. we get that $\sqrt[3]{i+1}-1 \in I_1$.
		
$(b) \Rightarrow (a)$
Assume that $3 \in I_2$. Let $(0, 0) \not = a = (a_1, a_2), b = (b_1, b_2) \in R$ with $a^3 - b^3 \in I$. Then $a_2^3 - b_2^3 \in I_2$, and thus $a_2 - b_2 \in I_2$ or $a_2^2+a_2b_2 + b_2^2 \in I_2$ since $I_2$ is a non-zero cdf-absorbing ideal of $R_2$.
Since $3 \in I_2$, we have $a_2^2+a_2b_2+b_2^2, (a_2 - b_2)^2 \in I_2$ by  Theorem~\ref{l1}. Also, $a_1^3 - b_1^3 \in I_1$; so $a_1 + b_1 \in I_1$ or $a_1^2+a_1b_1+ b_1^2 \in I_1$ since $I_1$ is a non-zero cdf-absorbing ideal of $R_1$.
If $a_1^2 +a_1b_1+ b_1^2 \in I_1$, then $a^2 +ab+ b^2 \in I$.
If $a_1 - b_1 \in I_1$, then $(a_1 - b_1)^2 \in I_1$. Therefore, $(a - b)^2 \in I$.
\end{proof}

In the following result, we determine the cdf-absorbing ideals in idealization rings. Recall that for a commutative ring $R$ and $R$-module $M$, the {\it idealization of $R$ and $M$} is the commutative ring $R(+)M = R \times M$ with identity $(1,0)$ under addition defined by $(r,m) + (s,n) = (r+s,m + n)$ and multiplication defined by $(r,m)(s,n) = (rs,rn + sm)$. For more on idealizations, see \cite{AW, H}.  Every ideal of $R(+)M$ has the form $I(+)N$ for $I$ an ideal of $R$ and $N$ a submodule of $M$ (see \cite[Theorem 25.1(1)]{H}).

\begin{thm}\label{t907}
Let $I$ a non-zero proper ideal of $R$, $M$ an $R$-module, and $N$ a submodule of $M$. Then we have the following.
\begin{itemize}
\item [(a)]  If $I (+) N$ is a cdf-absorbing ideal of $R (+) M$, then $I$ is a cdf-absorbing ideal of $R$ and $\{im : i\in I \ and \ m \in M \setminus N\}\subseteq N$.
\item [(b)]  If $I$ is a cdf-absorbing ideal of $R$, then $I (+) M$ is a cdf-absorbing ideal of $R (+) M$.
\end{itemize}
\end{thm}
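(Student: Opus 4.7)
For part (a), the natural first step is to reduce to statements in $R$ by testing the cdf-absorbing hypothesis on elements of $R(+)M$ with zero $M$-component. Given $a, b \in R$ with $a^3 - b^3 \in I$, consider $(a,0), (b,0) \in R(+)M$. Since cubing in an idealization gives $(r,m)^3 = (r^3, 3r^2 m)$, we have $(a,0)^3 - (b,0)^3 = (a^3 - b^3, 0) \in I(+)N$. Applying the cdf-absorbing hypothesis to $I(+)N$ then forces $a - b \in I$ or $a^2 + ab + b^2 \in I$, establishing that $I$ is cdf-absorbing in $R$.

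For the second assertion of (a), I need to manufacture a pair whose cube-difference lives in $I(+)N$ but whose $(a-b)$ fails to lie there, so that the cdf-absorbing hypothesis is forced to place the $a^2+ab+b^2$-witness into $I(+)N$, where the $M$-component of that witness will be a multiple of the required form $im$. Fix $i \in I$ and $m \in M \setminus N$, and take $a = (i, 0)$, $b = (0, -m)$. A direct computation using $(r,m)^3 = (r^3, 3r^2m)$ gives $a^3 - b^3 = (i^3, 0) \in I(+)N$, while $a - b = (i, m)$ has second component $m \notin N$. Thus the cdf-absorbing property must put $a^2 + ab + b^2 = (i^2, -im)$ into $I(+)N$, yielding $im \in N$. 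This is the step I expect to require the most care — choosing a witness for which exactly one of the two conclusions is blocked by $m \notin N$.

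For part (b), the approach is direct verification. Take $(a, m), (b, n) \in R(+)M$ with $(a,m)^3 - (b,n)^3 \in I(+)M$; reading off first components yields $a^3 - b^3 \in I$. Since $I$ is cdf-absorbing, either $a - b \in I$, in which case $(a,m) - (b,n) = (a-b, m-n)$ lies in $I(+)M$ (the $M$-component is unconstrained), or $a^2 + ab + b^2 \in I$, in which case expanding $(a,m)^2 + (a,m)(b,n) + (b,n)^2$ shows its first component is $a^2 + ab + b^2 \in I$ and its $M$-component is some element of $M$, so the whole element lies in $I(+)M$. Either way one of the two required memberships holds, completing the proof.

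The only subtle point across both parts is the choice of witness in (a); the remaining work is bookkeeping with the idealization multiplication, which is routine once the cubing and multiplication formulas in $R(+)M$ are in hand.
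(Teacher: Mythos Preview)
Your proof is correct and follows essentially the same approach as the paper: the paper likewise reduces part~(a) to elements of the form $(a,0),(b,0)$ for the first claim, tests the pair $(i,0),(0,m)$ (your choice of $(0,-m)$ differs only by a harmless sign) for the containment $im\in N$, and verifies part~(b) by reading off first components and using that the second component is unconstrained in $I(+)M$. The computations and case analysis match the paper's proof line for line.
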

\begin{proof}
(a) Let $I (+) N$ be a cdf-absorbing ideal of $R (+) M$. Then it is easily verified that $I$ is a cdf-absorbing ideal of $R$.  Let $m \in M \setminus N$, $ i \in I$, and $a: = (i, 0), b: = (0, m) \in R(+)M$. Then $a^3 - b^3 = (i^3,  0) \in I(+)N $ implies that $a^2+ab+b^2 = (i^2, im) \in I (+) N$  or  $a - b = (i, -m) \in I (+) N$.
Since $m \in M \setminus N$, we have  $a - b = (i, -m) \not\in I (+) N$. Thus $a^2+ab+b^2 = (i^2, im) \in I (+) N$. Hence, $im \in N$. Therefore, $\{im : i\in I \ and \ m \in M \setminus N\}\subseteq N$.

(b) Let $I$ be a cdf-absorbing ideal of $R$. Suppose that $a^3 - b^3 \in I (+) M$  for $(0, 0) \neq a, b \in R (+) M$, where $x = (a, m_1)$ and $y =(b, m_2)$. Since $I$  is a non-zero cdf-absorbing ideal of $R$ and $a^3 - b^3 \in I$, we have  $a^2+ab + b^2 \in I$ or $a - b \in I$. If $a^2+ab + b^2 \in I$, then $x^2+xy+ y^2 \in  I (+) M $. If $a - b \in I$, then $x - y \in  I (+) M $. Thus $I (+) M$ is a cdf-absorbing ideal of $R (+) M$.
\end{proof}

The following example shows that it is crucial that $I$ be a non-zero ideal in Theorem~\ref{t907}.

\begin{ex} \label{e101}
{\rm  Let $R = \mathbb{Z}_8$, $M =N = \mathbb{Z}_8$, and $I = \{0\}$. Then $\{0\}$ is a cdf-absorbing ideal of $\mathbb{Z}_8$, but $\{0\} (+) \mathbb{Z}_8$ is not a cdf-absorbing ideal of $\mathbb{Z}_8 (+) \mathbb{Z}_8$. To see this consider $x = (2,0), y = (0,2)$. Then $a^3-b^3=(8,0)=(0,0)\in\{0\} (+) \mathbb{Z}_8$ but $a-b=(2,-2) \not \in \{0\} (+) \mathbb{Z}_8$ and $a^2+ab+b^2=(4,4) \not \in \{0\} (+) \mathbb{Z}_8$.}
\end{ex}

Recall that a proper ideal $I$ of $R$ is said to be \textit{$n$-semi-absorbing} for the positive integer $n$ if $x^{n+1}\in I$ implies that $x^n \in I$ for any $x\in R$ \cite{2017}.

Next, we consider when $\{0\}(+)M$ is a cdf-absorbing ideal of $R(+)M$.
\begin{prop} \label{p101}
{ \rm Let $M$ be a non-zero $R$-module.
Then $\{0\}(+)M$ is a cdf-absorbing ideal of $R(+)M$ if and only if $\{0\}$ is a 2-semi-absorbing ideal of $R$ and $\{0\}$ is a cdf-absorbing ideal of $R$.}
\end{prop}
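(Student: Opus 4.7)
The plan is to unwind the hypothesis via the coordinate formulas in $R(+)M$ and then show that membership in $\{0\}(+)M$ is controlled entirely by the first coordinate. Using $(r,m)^2=(r^2,2rm)$, $(r,m)^3=(r^3,3r^2m)$, and the multiplication rule $(r,m)(s,n)=(rs,rn+sm)$, I obtain, for $x=(a,m_1)$ and $y=(b,m_2)$, that the first coordinates of $x^3-y^3$, of $x-y$, and of $x^2+xy+y^2$ are $a^3-b^3$, $a-b$, and $a^2+ab+b^2$ respectively. Hence membership of each of these elements in $\{0\}(+)M$ is equivalent to the corresponding identity in $R$.

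For the forward direction, I would first recover that $\{0\}$ is cdf-absorbing in $R$ by testing the hypothesis on pairs $(a,0),(b,0)$ with $a,b\neq 0$ satisfying $a^3=b^3$; these are nonzero in $R(+)M$, and the dictionary above forces $a=b$ or $a^2+ab+b^2=0$. To obtain the $2$-semi-absorbing property I would exploit the standing assumption $M\neq 0$: given a nonzero $b\in R$ with $b^3=0$, choose a nonzero $m\in M$ and set $x=(0,m)$, $y=(b,0)$. Both are nonzero, and $x^3-y^3=(-b^3,0)=(0,0)\in\{0\}(+)M$. Since $x-y=(-b,m)$ has nonzero first coordinate, the absorption is forced into the other alternative, so $x^2+xy+y^2\in\{0\}(+)M$, whose first coordinate equals $b^2$; this yields $b^2=0$.

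For the converse, start with nonzero $x=(a,m_1),y=(b,m_2)\in R(+)M$ with $x^3-y^3\in\{0\}(+)M$, so $a^3=b^3$ in $R$. If both $a,b$ are nonzero, the cdf-absorbing property of $\{0\}$ in $R$ gives $a=b$ or $a^2+ab+b^2=0$. If $a=0$, then $b^3=0$, so $2$-semi-absorbing gives $b^2=0$ and hence $a^2+ab+b^2=0$; the case $b=0$ is symmetric, and $a=b=0$ is immediate. In every case the matching first-coordinate identity, together with the dictionary, places the relevant element in $\{0\}(+)M$, so $\{0\}(+)M$ is cdf-absorbing.

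The main obstacle is the forward direction for $2$-semi-absorbing, because that is the only place the hypothesis $M\neq 0$ is genuinely used: without a nonzero $m\in M$ one cannot construct a legitimate nonzero test element $(0,m)$ whose first coordinate vanishes, and the cdf-absorbing hypothesis on $\{0\}(+)M$ is triggered only by nonzero inputs. Once that construction is in hand, both implications reduce to coordinate bookkeeping.
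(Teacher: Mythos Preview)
Your argument follows essentially the same line as the paper's: compute in coordinates, observe that membership in $\{0\}(+)M$ is governed by the first coordinate alone, and transfer the cdf-absorbing condition between $R(+)M$ and $R$. The one discrepancy worth flagging is that you work throughout as though the cdf-absorbing condition is tested only on \emph{nonzero} pairs (presumably by analogy with the sdf-absorbing definition), whereas the paper's definition of cdf-absorbing imposes no such restriction. Because of this, the paper's converse is shorter than yours: it applies the cdf-absorbing property of $\{0\}$ in $R$ directly to the first coordinates $a,b$ with no case split on $a=0$ or $b=0$, and in fact it never invokes the 2-semi-absorbing hypothesis in that direction at all. Likewise, your claim that $M\neq 0$ is ``genuinely used'' to manufacture a nonzero test element $(0,m)$ does not match the paper's setup: in the forward direction the paper takes $x=(a,0)$ and $y=(0,m)$ for \emph{some} $m\in M$ with no nonzero requirement, and the contradiction is obtained purely from the first coordinates $a$ and $a^2$. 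Your proof is correct under your stricter reading, but the extra casework and the role you assign to $M\neq 0$ are artifacts of that reading rather than features of the paper's own argument.
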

\begin{proof}
First assume that $\{0\}(+)M$ is a cdf-absorbing ideal of $R(+)M$. Suppose that $\{0\}$ is not a 2-semi-absorbing ideal of $R$. Then there exists $a\in R$ such that $a^3=0$ but $a^2\not=0$. Let $x=(a,0)$ and $y=(0,m)$ for some $m \in M$. Then $x^3-y^3=(a^3,0)=(0,0) \in \{0\}(+)M$.
Thus by assumption, $x-y=(a,-m)\in \{0\}(+)M$ or $x^2+xy+y^2=(a^2,am) \in \{0\}(+)M$, which are contradictions. Thus $\{0\}$ is a 2-semi-absorbing ideal of $R$. Now, let $a^3-b^3 \in  \{0\}$. Then by consider $x=(a,0)$ and $y=(b,0)$ one can see that $\{0\}$ is a cdf-absorbing ideal of $R$. Conversely,  Assume that $\{0\}$ is a cdf-absorbing ideal of $R$. Let $x^3 - y^3 \in \{0\} (+) M$  for $x, y \in R (+) M$, where $x = (a, m_1)$ and $y =(b, m_2)$. Since $\{0\}$ is a cdf-absorbing ideal of $R$ and $a^3 - b^3 \in \{0\}$, we have  $a^2+ab + b^2 \in \{0\}$ or $a - b \in \{0\}$. If $a^2+ab + b^2 \in \{0\}$, then $x^2+xy+ y^2 \in  \{0\} (+) M $. If $a - b \in \{0\}$, then $x - y \in  \{0\} (+) M $. Thus $\{0\} (+) M$ is a cdf-absorbing ideal of $R (+) M$.
\end{proof}

In the next result, we study cdf-absorbing ideals in amalgamation rings. Let $A, B$ be commutative rings, $f: A \longrightarrow B$ a homomorphism, and $J$ an ideal of $B$. Recall that the {\it amalgamation of $A$ and $B$ with respect to $f$ along $J$} is the subring $A \bowtie_J B = \{(a, f(a) + j ) \mid a \in A, j \in J\}$ of $A \times B$ \cite{DF07}.

\begin{thm}\label{t709}
Let $A$ and $B$ be commutative rings, $f: A \longrightarrow B$ a homomorphism, $J$ an ideal of $B$, and $I$ a non-zero proper ideal of $A$. Then $I \bowtie_J B$ is a cdf-absorbing ideal of $A\bowtie_J B$ if and only if $I$ is a cdf-absorbing ideal of $A$.
\end{thm}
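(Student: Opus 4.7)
The strategy is to pass the cdf-absorbing condition back and forth across the two natural maps relating $A$ and $A\bowtie_J B$, namely the diagonal-type embedding $\iota:A\longrightarrow A\bowtie_J B$, $\iota(a)=(a,f(a))$, and the projection onto the first coordinate $\pi:A\bowtie_J B\longrightarrow A$, $(a,f(a)+j)\mapsto a$. Both are ring homomorphisms, and the preimage, respectively image, of the relevant ideal is the other one; so most of the work is a direct unwinding.

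For the ``only if'' direction I would start with $a,b\in A$ such that $a^3-b^3\in I$. Then $\iota(a)^3-\iota(b)^3=(a^3-b^3,\,f(a^3-b^3))$, and since the second coordinate equals $f(a^3-b^3)+0$ with $0\in J$, this element lies in $I\bowtie_J B$. The hypothesis that $I\bowtie_J B$ is cdf-absorbing gives $\iota(a)-\iota(b)\in I\bowtie_J B$ or $\iota(a)^2+\iota(a)\iota(b)+\iota(b)^2\in I\bowtie_J B$. Reading off first coordinates yields $a-b\in I$ or $a^2+ab+b^2\in I$, as required.

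For the ``if'' direction I would take $x=(a,f(a)+j)$ and $y=(b,f(b)+k)$ in $A\bowtie_J B$ with $x^3-y^3\in I\bowtie_J B$. Projecting onto the first coordinate gives $a^3-b^3\in I$, so by assumption either $a-b\in I$ or $a^2+ab+b^2\in I$. In the first case, $x-y=(a-b,\,f(a-b)+(j-k))$ lies in $I\bowtie_J B$ since $j-k\in J$. In the second case I expand
\[
(f(a)+j)^2+(f(a)+j)(f(b)+k)+(f(b)+k)^2=f(a^2+ab+b^2)+j',
\]
where a short computation shows $j'\in J$ because every term other than $f(a)^2+f(a)f(b)+f(b)^2$ involves $j$ or $k$ as a factor. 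Hence $x^2+xy+y^2=(a^2+ab+b^2,\,f(a^2+ab+b^2)+j')\in I\bowtie_J B$.

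I do not expect any real obstacle here: the only slightly delicate piece is the expansion $(f(a)+j)^2+(f(a)+j)(f(b)+k)+(f(b)+k)^2$ in the ``if'' direction, which is routine once one observes that $J$ is an ideal of $B$ and so absorbs all the cross-terms. The role of the assumption that $I$ be non-zero is only implicit, inherited from the framework (it is needed in the preceding product and idealization results but not actively used in this purely ``push-pull along a homomorphism'' argument).
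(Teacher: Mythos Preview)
Your proof is correct and follows essentially the same route as the paper: the paper handles the ``only if'' direction by saying it is easy to see, and for the ``if'' direction it takes $x=(a,f(a)+j_1)$, $y=(b,f(b)+j_2)$, projects to get $a^3-b^3\in I$, and then writes out $x-y$ and $x^2+xy+y^2$ explicitly to see that the $J$-part absorbs the cross-terms, exactly as you do. Your framing via the embedding $\iota$ and projection $\pi$ is a clean way to say the same thing, and your remark that the non-zero hypothesis on $I$ is not actively invoked in the argument is accurate---the paper's proof does not use it either.
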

\begin{proof}
If $I \bowtie_J B$ is a cdf-absorbing ideal of $A\bowtie_J B$, then it is easily to see that $I$ is a cdf-absorbing ideal of $A$.

Conversely, assume that $I$ is a non-zero cdf-absorbing ideal of $A$. Let $x = (a, f(a)  + j_1), y = (b, f(b) + j_2) \in A\bowtie_J B$ such that $x^3 - y^3 \in I \bowtie_J B$. Since $a^3 - b^3 \in I$ and $I$ is a non-zero cdf-absorbing ideal of $A$, we have $a^2+ab + b^2 \in I$ or $a - b \in I$. If $a^2+ab+ b^2\in I$, then
$$
x^2+xy+ y^2 =
$$
$$
(a^2+ab + b^2, f(a^2+ab + b^2) + j_1(2f(a)+j_1+f(b)) +j_2(2f(b)+j_2+f(a)))\in I\bowtie_J B.
$$
Similarly, if $a - b \in I$, then
$$
x - y = (a - b, f(a -b) + j_1 - j_2) \in I\bowtie_J B.
$$
Thus $I\bowtie_J B$ is a cdf-absorbing ideal of $A\bowtie_J B$.
\end{proof}

The following example shows that it is again crucial that $I$ be a non-zero ideal in Theorem~\ref{t709}.

\begin{ex}
{\rm Let $A = B = J = \mathbb{Z}_8$, $f = 1_A : A \longrightarrow A$, and $I = \{0\}$. Then $\{0\}$ is a cdf-absorbing ideal of $\mathbb{Z}_8$. But $\{0\} \bowtie_{\mathbb{Z}_8} \mathbb{Z}_8$ is not a cdf-absorbing ideal of $\mathbb{Z}_8\bowtie_{\mathbb{Z}_8} \mathbb{Z}_8$. To see this, consider $x = (2,0), y = (0,2)$. Then $a^3-b^3=(8,8)=(0,0)\in \{0\} \bowtie_{\mathbb{Z}_8} \mathbb{Z}_8$ but $a-b=(2,-2) \not \in \{0\} \bowtie_{\mathbb{Z}_8} \mathbb{Z}_8$ and $a^2+ab+b^2=(4,4) \not \in \{0\} \bowtie_{\mathbb{Z}_8} \mathbb{Z}_8$.}
\end{ex}

\end{document}